\DeclareMathOperator{\Coin}{Coin}
\newcommand{\MC}{\text{\textit{MC}}}
\newtheorem{thm}{Theorem}
\newtheorem{lem}[thm]{Lemma}
\theoremstyle{definition}
\begin{document}

\bibliographystyle{hplain}

\title{Maps on graphs can be deformed to be coincidence free\thanks{MSC2000: 54H25, 55M20}}
\author{P. Christopher Staecker\thanks{
Address: Department of Mathematics and Computer Science, Fairfield
University, Fairfield CT, USA}
\thanks{Email: cstaecker@fairfield.edu}
\thanks{Keywords: Nielsen theory, coincidence theory}
}

\maketitle
\begin{abstract}
We give a construction to remove coincidence points of continuous maps
on graphs ($1$-complexes) by changing the maps by homotopies. When the
codomain is not homeomorphic to the circle, we show that any pair of
maps can be changed by homotopies to be coincidence free. 
This means that there can
be no nontrivial coincidence index, Nielsen coincidence number, or coincidence
Reidemeister trace in this setting, and the results of our
previous paper ``A formula for the coincidence Reidemeister trace of
selfmaps on bouquets of circles" are invalid.
\end{abstract}

\section{Introduction}
Let $X$ and $Y$ be graphs, which are always assumed to be
nontrivial. Throughout, we will 
consider continuous maps $f,g:X \to Y$ (continuous maps of
$X$ and $Y$ as dimension $1$ CW-complexes)
and examine the coincidence set  
\[ \Coin(f,g) = \{ x \mid f(x) = g(x) \}. \]

The paper \cite{stae09a} attempts, in the special case of bouquets of
circles, to study coincidence points of $f$ and $g$ by computing the
Reidemeister trace, which would then allow the computation of the
Nielsen number of the pair $(f,g)$. This Nielsen number would be a
lower bound on the minimal number of coincidence points achievable by
deforming $f$ and $g$.

A serious error in \cite{stae09a} renders the approach fundamentally
misguided. The approach makes heavy use of the coincidence index,
which is not well-behaved for bouquets of circles. Our main result
(Theorem \ref{mccor}) is that maps $f,g:X \to Y$ of graphs
with $Y$ not homeomorphic to the circle can always be changed by
homotopy to be coincidence free.
Thus any coincidence index in this setting must always  
be zero, and so any Nielsen number or Reidemeister
trace which were being computed in \cite{stae09a} must have the
value zero.  

In Section \ref{intsection} we give our main result. We conclude in
Section \ref{errorsection} with a note on 
the specific errors in \cite{stae09a}.

We would like to thank Robert F. Brown for many helpful suggestions on
the organization of the paper, and the referee for suggestions which
substantially simplified the paper.

\section{Removing coincidences by homotopy}\label{intsection}

Our strategy for removing coincidences can be intuitively described
using a road traffic analogy. Consider a coincidence point which
occurs on the interior of an edge of the 
domain space. Then we parameterize this edge ($1$-cell) as the time interval
$[0,1]$, and we can view the maps $f$ and $g$ as being represented
by a pair of points which travel around the space $Y$. 

Let us imagine
that these points represent cars traveling on a network of single-lane
roads (so that the cars may not pass one another), and a coincidence
point of the maps will represent a collision of the cars.
A removal of a coincidence point by a homotopy will consist of a
strategy for letting the two cars pass one another without colliding.

Avoiding a collision is possible provided that there is a fork in the
network of roads where at least three roads meet: When two cars are
about to collide, one of them reverses direction until the fork is
reached. At this point, the car which reversed direction moves onto
the third road and allows the other to pass. The cars can now proceed
back to their original meeting point, this time with their positions
reversed. Repeating this process before each imminent collision allows
the cars to complete their trips without colliding.

This strategy is formalized as follows:
\begin{thm}\label{remcoin}
Let $f,g:X \to Y$ be maps of connected graphs with $Y$ not a manifold,
and let $x\in \Coin(f,g)$ be a coincidence point in the interior of
some edge. Then there is an arbitrarily small neighborhood $U$ of
$x$ on which $f$ and $g$ can be changed by homotopy to be coincidence free.
\end{thm}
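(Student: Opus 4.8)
The plan is to recast the problem as pushing a path off the diagonal, and to use the fork guaranteed by the non-manifold hypothesis to carry this out. Take $U=(a,b)$ to be a small open arc of the edge containing $x$, reparameterized so that $a<b$ and $x$ corresponds to an interior time $t_0$, and write $\alpha=f|_{\bar U}$ and $\beta=g|_{\bar U}$ for the two ``cars''. Every homotopy I construct will be supported in $U$ (stationary at $a$, $b$, and outside $U$), so it is enough to homotope the pair rel the endpoints. Packaging the two maps together as $\Phi=(\alpha,\beta)\colon \bar U\to Y\times Y$, a coincidence is precisely a point where $\Phi$ meets the diagonal $\Delta=\{(y,y):y\in Y\}$, and the goal becomes: homotope $\Phi$ rel endpoints so that $\Phi(t)\notin\Delta$ for all $t\in(a,b)$. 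Since a small neighborhood of any point of a graph is a contractible star, shrinking $U$ lets me first normalize $\Phi$ by a preliminary homotopy so that its only coincidence in $\bar U$ is a single transverse crossing at $x$; should $a$ or $b$ happen to be a coincidence, it simply remains on $\Delta$ at the boundary, which is harmless since $U$ is open.

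For the main construction, let $v$ be a vertex of valence at least three (a fork, which exists because $Y$ is not a manifold), let $\rho$ be a path in $Y$ from $c=f(x)=g(x)$ to $v$, and let $B_1,B_2$ be two initial edge-segments at $v$ distinct from the branch along which $\rho$ arrives; such $B_1,B_2$ exist because $v$ has at least three branches. The homotopy then proceeds in three stages, each supported in a window about $t_0$. First I drag the crossing along $\Delta$ out to the fork, homotoping so that near $t_0$ both cars run out along $\rho$ and meet near $v$. Next I perform the swap at the fork: I steer one car a short distance out $B_1$ and the other a short distance out $B_2$, carry each past the junction, and retract them, so that the two cars exchange their order along $\rho$ without ever occupying the same point. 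Finally I send both cars back in along $\rho$ to rejoin the untouched parts of $\alpha$ and $\beta$. Each stage is written as an explicit family, and one checks continuity and that the family is stationary at $a$ and $b$.

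The crux, and the only place the non-manifold hypothesis enters, is the swap stage: I must present the pair as a genuine path in $Y\times Y\setminus\Delta$ throughout, i.e. guarantee the two cars are never simultaneously at the same point. This amounts to showing that the two local sides of $\Delta$ near $(v,v)$ are joined within the complement, which holds because the off-diagonal squares $B_1\times B_2$ and $B_2\times B_1$ provide room to route around $\Delta$. On a $1$-manifold --- valence at most two, the excluded circle and arc --- these squares are absent, the two sides of $\Delta$ stay separated, and the relative order of the cars is a genuine obstruction, so the maneuver is impossible; this is exactly why the circle must be excluded. I expect the real work to lie in coordinating the timing of the two cars during the swap so that they remain distinct at every instant --- in particular in handling that the swap uses both auxiliary branches $B_1$ and $B_2$, not just one --- together with the routine check that the three stages glue into a single homotopy of $f$ and $g$ supported in the arbitrarily small neighborhood $U$.
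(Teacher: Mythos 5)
Your proposal is correct and follows essentially the same route as the paper: both retreat the two maps along the edge containing the coincidence value to a vertex of valence at least three, use the two auxiliary branches (your $B_1,B_2$, the paper's $\gamma,\lambda$) to let the two ``cars'' exchange order without colliding, and then return, with the paper recording the timing of this maneuver in a figure rather than in your diagonal-avoidance language. The only cosmetic difference is your packaging of the pair as a path in $Y\times Y$ avoiding $\Delta$; the geometric content, including the correct observation that both auxiliary branches are needed for the swap, is identical.
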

\begin{proof}
Let $\sigma$ be the edge ($1$-cell) containing $x$, which we identify with its
attaching map $\sigma:[0,1]\to X$. Let $x = \sigma(t_0)$, and let $U =
\sigma([t_0-\epsilon,t_0+\epsilon])$ for some small $\epsilon >0$. We
may assume that $f(x)=g(x)$ is a point on the interior of some
$1$-cell $\rho:[0,1]\to Y$. We can parameterize $\sigma$ and $\rho$
so that $f(x)=g(x)=\rho(1/2)$, and that $f$ and $g$ behave according
to the graph in Figure \ref{intgraph}. (We may perhaps have to
interchange the roles of $f$ and $g$.)

\begin{figure}
\newcommand{\xwidth}{60}
\[ \begin{xy}
(0,0)="ll";
(0,\xwidth) = "ul"; 
(\xwidth,0) = "lr";
(\xwidth,\xwidth) = "ur";
"ul";"ll"; **@{-};
"lr";"ll";**@{-};
(10,1);(10,-1);**@{-};
(30,1);(30,-1);**@{-};
(50,1);(50,-1);**@{-};
(-1,0);(0,0);**@{-};
(-1,10);(1,10);**@{-};
(-1,30);(1,30);**@{-};
(-1,50);(1,50);**@{-};
(-1,60);(1,60);**@{-};
(10,-3)*{\sigma(t_0-\epsilon)};
(30,-3)*{\sigma(t_0)};
(50,-3)*{\sigma(t_0+\epsilon)};
(-5,0)*{\rho(0)};
(-5,10)*{\rho(\frac14)};
(-5,30)*{\rho(\frac12)};
(-5,50)*{\rho(\frac34)};
(-5,60)*{\rho(1)};
(65,0)*{\sigma(t)};
(0,65)*{\rho(s)};
(10,50);(50,10); **@{-}; ?(0)*+!L{f};
(10,10);(50,50); **@{-}; ?(0)*+!L{g};
(30,30)*{\circ}
\end{xy} \]
\caption{Behavior of $f$ and $g$ on $U$.\label{intgraph}}
\end{figure}
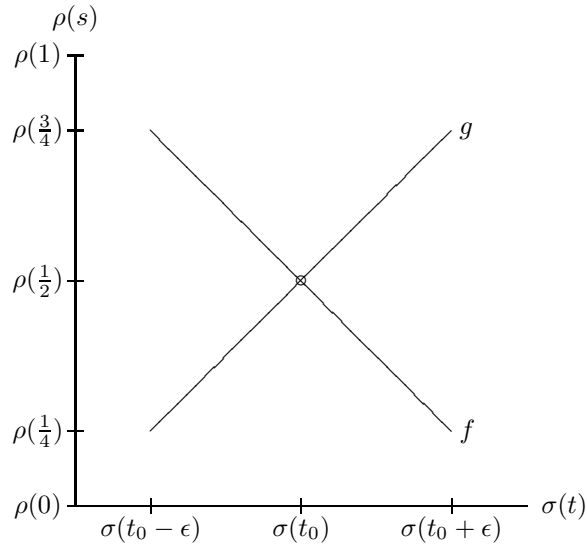

The assumption that $Y$ is not a manifold means that we may choose the
CW-complex structure on $Y$ so that the vertex $\sigma(0)$ meets two
other $1$-cells $\gamma,\lambda:[0,1]\to Y$ with
$\gamma(0)=\lambda(0)=\sigma(0)$. This vertex is the ``fork in the road''.
Now we change $f$ and $g$ by homotopy on $U$ to maps $f'$ and $g'$
according to Figure \ref{intgraphremoved}. Informally, the two maps
retreat to the fork point, use the fork to pass one another
without colliding, and return to their original positions at time $t_0+\epsilon$.
\begin{figure}
\newcommand{\xwidth}{60}
\[ \begin{xy}
(0,0)="ll";
(0,\xwidth) = "ul"; 
(\xwidth,0) = "lr";
(\xwidth,\xwidth) = "ur";
"ul";"ll"; **@{-};
"lr";"ll";**@{-};
(10,1);(10,-1);**@{-};
(30,1);(30,-1);**@{-};
(50,1);(50,-1);**@{-};
(-1,0);(0,0);**@{-};
(-1,10);(1,10);**@{-};
(-1,30);(1,30);**@{-};
(-1,50);(1,50);**@{-};
(-1,60);(1,60);**@{-};
(10,-3)*{\sigma(t_0-\epsilon)};
(30,-3)*{\sigma(t_0)};
(50,-3)*{\sigma(t_0+\epsilon)};
(-3,0)*{0};
(-3,10)*{\frac14};
(-3,30)*{\frac12};
(-3,50)*{\frac34};
(-3,60)*{1};
(65,0)*{\sigma(t)};
(0,65)*{s};
(50,50);(36,0);**@{-}; ?(1)*+!L{g'};
(36,0);(24,25);**@{--};
(24,25);(16,0);**@{--}; 
(16,0);(10,10);**@{-};
(50,10);(44,0);**@{-}; ?(1)*+!L{f'};
(44,0);(36,25);**@{.};
(36,25);(24,0);**@{.};
(24,0);(10,50);**@{-};
\end{xy} \]
\caption{Behavior of $f'$ and $g'$ on $U$. Solid line indicates values
  in $\rho(s)$, dotted line indicates values in $\gamma(s)$, and dashed
  line indicates values in $\lambda(s)$.\label{intgraphremoved}}
\end{figure}
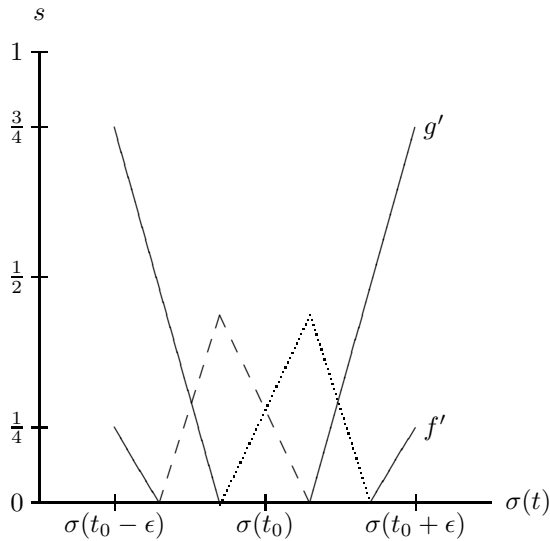
The maps $f'$ and $g'$ are free of coincidences on $U$, and the theorem is proved.
\end{proof}

The above theorem implies our main result, with the help of a lemma which is true for much more general spaces, though we only require it for complexes. Its proof is an exercise.
\begin{lem}
Let $f,g:X \to Y$ where $X$ and $Y$ are connected complexes, and let $x\in \Coin(f,g)$. Then for any neighborhood $U$ of $x$, we may change $f$ and $g$ by homotopy on $U$ so that $x$ is no longer a coincidence point.
\end{lem}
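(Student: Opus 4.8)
The plan is to leave $g$ unchanged and to nudge the value of $f$ at $x$ a small amount off of the common value $y := f(x) = g(x)$, keeping the homotopy supported inside $U$. The two ingredients I would isolate first are local. Since $Y$ is a CW-complex it is locally contractible, so I may choose a contractible neighborhood $N$ of $y$ together with a contraction $c:N\times[0,1]\to N$ satisfying $c(\,\cdot\,,0)=\id_N$ and $c(\,\cdot\,,1)\equiv y$. Since $Y$ is connected and nontrivial the point $y$ is not isolated, so I may choose a path $\gamma:[0,1]\to N$ with $\gamma(0)=y$ and $\gamma(t)\neq y$ for $t>0$ (if $y$ is a vertex this uses that some edge is incident to it, and if $y$ is interior to a cell it is immediate). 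By continuity of $f$ I then fix nested open neighborhoods $V''\subset V'\subset V\subset U$ of $x$, with nested closures, such that $f(\bar V)\subseteq N$.

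Next I would build the homotopy in two phases using Urysohn functions, which exist because CW-complexes are normal. Let $u:X\to[0,1]$ satisfy $u\equiv1$ on $\bar{V'}$ and $u\equiv0$ off $V$, and let $v:X\to[0,1]$ satisfy $v(x)=1$ and $v\equiv0$ off $V''$. In the first phase set $H(z,s)=c\bigl(f(z),\,s\,u(z)\bigr)$ for $s\in[0,1/2]$ (suitably reparametrized); this equals $f$ at $s=0$, is constant in $s$ off $V$, and at $s=1/2$ produces a map $f_{1/2}$ which agrees with $f$ outside $V$ and is identically $y$ on $\bar{V'}$. In the second phase, supported in $V''\subseteq V'$ where $f_{1/2}\equiv y=\gamma(0)$, set $H(z,s)=\gamma\bigl((2s-1)\,v(z)\bigr)$ for $z\in V''$ and $H(z,s)=f_{1/2}(z)$ otherwise. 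The resulting end map $f'$ is homotopic to $f$ through a homotopy fixed outside $U$, and $f'(x)=\gamma(1)\neq y=g(x)$, so $x\notin\Coin(f',g)$.

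The only points needing care are bookkeeping ones. Continuity at the seam $\partial V''$ in the second phase is the step I would check most carefully: it holds precisely because $f_{1/2}\equiv y$ on all of $\bar{V'}\supseteq\bar{V''}$, so both branches of $H$ agree (and equal $y$) as $z$ crosses $\partial V''$ while $v(z)\to0$. Verifying that the homotopy is stationary outside $U$ (immediate from the supports of $u$ and $v$) and that $H(\,\cdot\,,0)=f$ is routine. I expect no genuine obstacle, consistent with the author's remark that the proof is an exercise: the entire content is that $Y$ being a nontrivial complex simultaneously supplies a contractible neighborhood of $y$ and a nondegenerate direction along which to push $f(x)$ away from $g(x)$, and any new coincidences created elsewhere in $U$ are irrelevant, since the lemma only asks that the single point $x$ be cleared.
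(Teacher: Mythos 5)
The paper gives no proof of this lemma --- it is explicitly left as ``an exercise'' --- so there is no argument of the author's to compare yours against; your proposal is a correct and complete solution of that exercise, and is surely the intended one: contract $f$ near $x$ into a contractible neighborhood of $y=f(x)$ using a bump function, then push the value at $x$ along a path leaving $y$, all supported in $U$, leaving $g$ untouched. The only points worth tightening are routine: the formula $c(f(z),su(z))$ should formally be pasted with $f(z)$ along $\partial V$ (where both give $f(z)$ since $u$ vanishes there), and you rightly note that the argument needs $Y$ to be a nontrivial complex --- a hypothesis absent from the lemma's statement but imposed globally in the paper's conventions, and genuinely necessary, since the lemma is false when $Y$ is a point.
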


The lemma above means that we may assume that every coincidence of our maps occurs on the interior of an edge, and then Theorem \ref{remcoin} can be applied repeatedly to remove them. Thus we obtain:

\begin{thm}\label{mccor}
If $f,g:X \to Y$ are maps on connected graphs with $Y$ not homeomorphic to
the circle, then $f$ and $g$ can be changed by homotopy to be
coincidence free. 
\end{thm}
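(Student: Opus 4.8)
The plan is to reduce Theorem~\ref{mccor} to the already-established Theorem~\ref{remcoin} and the intervening Lemma. The main case is when $Y$ is not a manifold, so I would dispose of the manifold cases first. A connected graph that is a topological $1$-manifold (without boundary) is a circle, which is excluded by hypothesis; a connected graph that is a $1$-manifold with boundary is a closed arc, i.e. homeomorphic to $[0,1]$. So the only situation in which $Y$ could be a manifold is when $Y$ is an arc. I would handle this degenerate case separately: if $Y\cong[0,1]$, then $Y$ is contractible, so any two maps $f,g\colon X\to Y$ are homotopic to constant maps, and in fact one can directly push $g$ off of $f$ using the linear/order structure of $[0,1]$ to make them coincidence free. (Alternatively, a contractible codomain means the pair can be made coincidence free by an elementary argument.) This leaves the generic case $Y$ not a manifold, where Theorem~\ref{remcoin} is available.

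Assume henceforth that $Y$ is not a manifold. The strategy is: first arrange that all coincidences lie in the interiors of edges, then remove them one at a time via Theorem~\ref{remcoin}. First I would invoke the Lemma: for each coincidence point $x$ that is a vertex of $X$ (or whose image is a vertex, or which otherwise fails to sit in an edge interior), I would apply the Lemma on a small neighborhood of $x$ to push that coincidence off $x$. After doing this for all such ``bad'' coincidences, every remaining coincidence point lies in the interior of some edge of $X$. Then for each such $x$, Theorem~\ref{remcoin} supplies an arbitrarily small neighborhood $U$ of $x$ and a homotopy, supported in $U$, after which $f$ and $g$ are coincidence free on $U$.

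The step that requires the most care is the \emph{bookkeeping of infinitely many local modifications into a single global homotopy}. The coincidence set $\Coin(f,g)$ may a priori be infinite, so I cannot literally ``apply Theorem~\ref{remcoin} repeatedly'' in a naive sequential fashion and expect to terminate. The clean way to handle this is to exploit the ``arbitrarily small neighborhood'' clause: since the domain graph $X$ is a locally finite (and here compact, being a finite complex) $1$-complex, I would choose the neighborhoods $U$ produced by Theorem~\ref{remcoin} small enough to be pairwise disjoint and to meet only finitely many edges, so that the local homotopies have disjoint supports and can be performed simultaneously. If $X$ is compact, finitely many such $U$ cover a neighborhood of the (closed) coincidence set, reducing the problem to finitely many disjoint local moves; the supports being disjoint means the concatenated homotopy is well defined and continuous, and outside all the $U$'s the maps are unchanged and already coincidence free.

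Finally I would assemble these observations: after the preliminary Lemma step every coincidence is interior to an edge, after the disjoint-support application of Theorem~\ref{remcoin} the maps $f'$ and $g'$ agree with $f,g$ outside a union of small neighborhoods and are coincidence free on each such neighborhood, hence coincidence free everywhere. The homotopies from $f$ to $f'$ and from $g$ to $g'$ are the ones furnished locally by the Lemma and Theorem~\ref{remcoin}, glued via the disjointness of their supports, which completes the proof. I expect the only genuinely nontrivial points to be (i) the finiteness/disjointness argument just described and (ii) the explicit treatment of the arc case $Y\cong[0,1]$, both of which the paper seems content to summarize informally as ``apply Theorem~\ref{remcoin} repeatedly.''
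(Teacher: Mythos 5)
Your overall skeleton matches the paper's: dispose of the case $Y\cong[0,1]$ by nullhomotoping $f$ and $g$ to distinct constants, use the Lemma to push coincidences off the vertices, and then remove the remaining coincidences with Theorem \ref{remcoin}. The divergence --- and the gap --- is in how you pass from a possibly infinite coincidence set to finitely many removable coincidences. The paper's proof inserts a preliminary step that you omit: it first homotopes $f$ and $g$ to ``linear'' maps in the sense of \cite{stae09a}, which makes $\Coin(f,g)$ a \emph{finite} set of transverse crossings; only then does the local picture of Figure \ref{intgraph}, on which the proof of Theorem \ref{remcoin} actually depends, hold near each coincidence point. Your substitute --- covering the compact set $\Coin(f,g)$ by finitely many pairwise disjoint neighborhoods furnished by Theorem \ref{remcoin} --- does not work in general. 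Consider $f=g$, or more mildly two maps agreeing on a whole subarc of an edge: then $\Coin(f,g)$ contains an arc, and no finite collection of pairwise disjoint closed intervals whose endpoints avoid $\Coin(f,g)$ (as they must, since the local homotopy is rel $\partial U$ and has to leave $\partial U$ coincidence free) can cover it. Even when $\Coin(f,g)$ is totally disconnected it may have accumulation points, in which case any neighborhood $U$ of one coincidence point contains infinitely many others whose local behavior need not be the single transverse crossing that the proof of Theorem \ref{remcoin} treats; the ``arbitrarily small'' clause lets you shrink $U$ but never lets you outrun the accumulation.

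The fix is precisely the line missing from your argument: a general-position (linearization) homotopy making $\Coin(f,g)$ finite before any local surgery is attempted. Once that is in place, your bookkeeping concerns essentially evaporate --- finitely many isolated coincidences, each with the transverse local model, are removed one at a time on small disjoint neighborhoods, which is all the paper means by ``repeated application of Theorem \ref{remcoin}.'' Your treatment of the arc case and your use of the Lemma to clear the vertices are both correct and agree with the paper.
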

\begin{proof}
If $Y$ is homeomorphic to the interval $[0,1]$, then $f$ and $g$ are trivially nullhomotopic and thus can be made to be coincidence free by deforming them into different constant maps. Thus we may assume that $Y$ is not a manifold, and we may freely use Theorem \ref{remcoin}.

First, we may change our maps by homotopy to be ``linear'' as in
\cite{stae09a} so that $\Coin(f,g)$ is a finite set. Furthermore by
the lemma we may assume that all coincidences occur at interior points
of edges. Then repeated application of Theorem \ref{remcoin} will
remove all coincidences. 
\end{proof}

See the end of Section \ref{errorsection} for a note on the case where
$Y$ is the circle.

The above theorem highlights the fact that coincidence theory on
graphs is not a generalization of fixed point theory. It is
certainly possible for a selfmap $f$ on e.g.\ a bouquet of 2 circles to have
fixed points which cannot be removed by homotopy, even though (by
Theorem \ref{mccor}) any
coincidences of $f$ with the identity map \emph{can} be removed. This 
occurs because our removal construction changes the
second map by homotopy as well as the first. This distinction does
not occur between fixed point and coincidence theory on manifolds and
some other spaces, as demonstrated by Brooks in \cite{broo71}, but
Brooks' result does not apply to complexes in general. 

\section{The error of \cite{stae09a}, and the coincidence index}\label{errorsection}
The formula given for the Reidemeister trace in \cite{stae09a} uses essentially two ingredients: the computation of the Reidemeister class for each coincidence point, and the computation of the coincidence index for each coincidence point. The material concerning the Reidemeister class is essentially correct, and the material concerning the index is incorrect.

The error specifically arises on page 43 of \cite{stae09a}: ``Near any
point $x$ other than $x_0$, the space $X$ is an orientable
differentiable manifold, and we define the coincidence index as usual
for that setting.'' This formulation of the coincidence index is not
well-behaved under homotopy. If, over the course of the homotopy, the
coincidence value (the common value of $f(x)$ and $g(x)$) travels
through the wedge point $y_0$, this ``index'' will change unpredictably.

In fact, two fundamental properties of the coincidence index
are that it is invariant under
homotopies of $f$ and $g$, and that the index is zero when $f$ and $g$
are coincidence-free on $U$. Since (by Theorem \ref{mccor}) the
coincidence set for maps of graphs can always be made empty by
homotopies, any ``coincidence index'' in this setting must always give
the value zero. 

Some nontrivial indices can be defined by restricting the
structure of either the domain or the codomain spaces. Gon\c{c}alves
in \cite{gonc99} gives a coincidence index for maps from a complex into a
manifold of the same dimension, which suffices to address the
exceptional case from Section \ref{intsection}, the case where $Y$ is
the circle. In this case Gon\c{c}alves's index does provide a
nontrivial coincidence index which generalizes the fixed point index. 

Thus there are many
examples of maps $f,g:X \to S^1$ for which $\MC(f,g)$, the minimal
number of coincidence points when $f$ and $g$ are changed by homotopy,
is nonzero (this
will occur whenever Gon\c{c}alves's index is nonzero). In particular
when $X$ is also $S^1$, 
it is known that $\MC(f,g)$ is the Nielsen number $N(f,g) = |\deg(f) -
\deg(g)|$, which is easily made nonzero.

\end{document}